\font\chuto=cmbx10 at 16pt  
\newtheorem{thm}{Theorem}[section]
\theoremstyle{definition}
\newtheorem{rem}[thm]{Remark}
\numberwithin{equation}{section}
\begin{document}

\setlength{\unitlength}{1cm}

\centerline {\bf \chuto  Three  pearls   of    Bernoulli   numbers}
\medskip
%%\centerline {\bf \chuto  Title of your paper (Line 2)}

\renewcommand{\thefootnote}{\fnsymbol{footnote}}

\vskip.8cm
%\centerline {Name1 Surname1$^{1,}${\footnote{\textit{Correspondence to: e-mail: \texttt{xxx@hotmail.com}}}},
\centerline {Abdelmoum\'ene Z\'ekiri$^{1,}${\footnote{\text{A. Zekiri :  czekiri@gmail.com}}}
              and Farid Bencherif$^{2,}${\footnote{\text{F. Bencherif :  fbencherif@yahoo.fr}}}
% author name goes in here
}

\renewcommand{\thefootnote}{\arabic{footnote}}

\vskip.5cm
\centerline{$^1$Faculty of Mathematics, USTHB, Algiers, Algeria
% address goes in here
}
\centerline{$^2$Faculty of Mathematics, USTHB, Algiers, Algeria
% address goes in here
}
%\centerline{$^3$author3's address
%% address goes in here
%}

\vskip .5cm

\vskip .5cm

\begin{abstract}
The Bernoulli numbers are fascinating and ubiquitous numbers; they occur in
several domains of Mathematics like Number theory ( FLT), Group theory,
Calculus and even in Physics. Since Bernoulli's work, they are yet studied
to understand their deep nature \cite{MAZ}, \cite{MIK} and particularly to
find relationships between them. In this paper, we give, firstly, a short
response \cite{ZEK-BEN2} to a problem stated, in 1971, by Carlitz \cite{CAR}
and studied by many authors like Prodinger \cite{PRO}; the second pearl is
an answer to a question raised, in 2008, by Tom Apostol \cite{APO}.The third
pearl is another proof of a relationship already given in 2011, by the
authors \cite{ZEK-BEN1}

\end{abstract}

\medskip
\smallskip
\noindent
{\bf Keywords:} Bernoulli numbers, Bernoulli polynomials.

\noindent
{\bf Mathematics Subject Classification:} 11B68% using a comma in order to separate the list

%%%%%%%%%%%%%%%%%%%%%%%%%%%%%%%%%%%%%%%%%%

\section{Introduction}

The aim of this work is to give original proofs of three relationships
involving Bernoulli numbers. In the fisrt section, we give a short proof to
a problem stated, in 1971, by Carlitz \cite{CAR} and studied by many authors
like Prodinger \cite{PRO}. In the second section, we give a response to a
question raised by Apostol in 2008 in his relevant paper \cite{APO}. In the
third section, we expose a different proof of a relationship already given
by us in 2011 \cite{ZEK-BEN2}. \vspace{0.5cm}

\section{Pearl \#1: Carlitz's Problem}

In Mathematics Magazine, Vol. 44, No. 2 (Mar., 1971), pp. 105-114+101,
Carlitz states the following problem :

\ \ \ \ \ \textit{define }$\{B_{n}\}$\textit{\ by means of }$B_{0}=1$\textit{%
\ and for }$n>1$\textit{\ }%
\begin{equation*}
\sum_{k=0}^{n}\binom{n}{k}B_{k}=B_{n}
\end{equation*}%
\textit{show that for arbitrary} $m,n>0$ 
\begin{equation*}
(-1)^{m}\sum_{k=0}^{m}\binom{m}{k}B_{n+k}=(-1)^{n}\sum_{k=0}^{n}\binom{n}{k}%
B_{m+k}
\end{equation*}%
This identity was firstly proved by Shanon \cite{SHA} in 1971, by Gessel 
\cite{GES} in 2003, by Wu, Sun and Pan \cite{WU-SUN-PAN} in 2004, by
Vassilev-Missana \cite{VAS-MIS} in 2005, by Chen and Sun \cite{CHE-SUN} in
2009, by Gould and J. Quaintance \cite{GOU-QUA} in 2014 and by Prodinger 
\cite{PRO} in 2014. The Prodinger's proof is very short and uses a two
variables formal series. In fact, one can see that Carlitz's problem can be
easily deduced from the following relationship already proved in 2012 by
Bench\'{e}rif and Garici in 2012 \cite{BEN-GAR} : 
\begin{equation*}
(-1)^{m}\sum_{k=0}^{m+q}\binom{m+q}{k}\binom{n+q+k}{q}B_{n+k}-(-1)^{n+q}%
\sum_{k=0}^{n+q}\binom{n+q}{k}\binom{m+q+k}{q}B_{m+k}=0.
\end{equation*}%
Hereafter, we give a proof different from that was given by Prodinger.

\begin{proof}
We consider the linear functional $L$ defined on $\mathbb{Q}[x]$ by $%
L(x^{n})=B_{n}$ for $n\geq 0$, which gives 
\begin{equation*}
L\left( \left( x+\frac{1}{2}\right) ^{2n+1}\right) =B_{2n+1}\left( \frac{1}{2%
}\right) =0,
\end{equation*}%
see \cite{APO}, p.182, then the polynomial defined by : 
\begin{equation*}
P(x)=(-1)^{m+q}x^{n+q}(1+x)^{m+q}-(-1)^{n}x^{m+q}(1+x)^{n+q}
\end{equation*}%
satisfies 
\begin{equation*}
P\left( -\frac{1}{2}+x\right) +(-1)^{q}P\left( -\frac{1}{2}-x\right) =0
\end{equation*}%
and 
\begin{equation*}
P^{(q)}(-\frac{1}{2}+x)+P^{(q)}(-\frac{1}{2}-x)=0
\end{equation*}%
Now, with use of the equality : 
\begin{equation*}
L\left( \left( x+\frac{1}{2}\right) ^{2n+1}\right) =B_{2n+1}\left( \frac{1}{2%
}\right) =0.
\end{equation*}%
And as $P^{(q)}$ is an even polynomial , we get : 
\begin{equation*}
L\left( P^{(q)}(x)\right) =0
\end{equation*}%
Thus 
\begin{equation*}
\frac{1}{q!}P^{(q)}(x)=(-1)^{m}\sum_{k=0}^{m+q}\binom{m+q}{k}\binom{n+q+k}{q}%
x^{n+k}-(-1)^{n+q}\sum_{k=0}^{n+q}\binom{n+q}{k}\binom{m+q+k}{q}x^{m+k}=0
\end{equation*}%
and finally: 
\begin{equation*}
(-1)^{m}\sum_{k=0}^{m+q}\binom{m+q}{k}\binom{n+q+k}{q}B_{n+k}-(-1)^{n+q}%
\sum_{k=0}^{n+q}\binom{n+q}{k}\binom{m+q+k}{q}B_{m+k}=0
\end{equation*}%
which yields the identity wanted by Carlitz, by taking $q=0.$
\end{proof}

\section{Pearl \#2: APOSTOL'S PROBLEM}

In his relevant paper published in 2008, Tom Apostol writes : \textit{we
leave it as a challenge to the reader to find another proof of (42) as a
direct consequence of (3) without the use of integration}. In Apostol's
paper, (42) denotes the relationship: 
\begin{equation*}
\sum_{k=0}^{n}\binom{n}{k}\frac{B_{k}}{(n+2-k)}=\frac{B_{n+1}}{n+1},\quad
n\geq 1
\end{equation*}%
and (3) is one of the \textit{six} definitions of the Bernoulli numbers he
recalls to show his relationship and which is : 
\begin{equation*}
B_{0}=1,\qquad \sum_{k=0}^{n-1}\binom{n}{k}B_{k}=0\quad \text{for}\ n\geq 2
\end{equation*}%
As he said it, Apostol uses \textit{integration method} to deduce his (42)-
numbered relation from the Bernoulli numbers's definition that he has
chosen. To take up the challenge he has launched, we expose a proof without
use of \textit{integration method}.

\begin{proof}
( Answer to Apostol's problem)

Let's define the sequence $\left( u_{n}\right) $ by: 
\begin{equation*}
u_{n}:=\sum_{k=0}^{n}\binom{n+1}{k}B_{k}
\end{equation*}%
We can see that $u_{n}=0$ for $n\geq 1$. Writing: 
\begin{equation*}
\binom{n}{k}\frac{1}{n+2-k}=\frac{1}{n+1}\binom{n+1}{k}-\frac{1}{(n+1)(n+2)}%
\binom{n+2}{k}
\end{equation*}%
we get : 
\begin{equation*}
\sum_{k=0}^{n}\binom{n}{k}\frac{B_{k}}{n+2-k}=\frac{1}{n+1}\sum_{k=0}^{n}%
\binom{n+1}{k}B_{k}-\frac{1}{(n+1)(n+2)}\sum_{k=0}^{n}\binom{n+2}{k}B_{k}
\end{equation*}%
which yields : 
\begin{equation*}
\sum_{k=0}^{n}\binom{n}{k}\frac{B_{k}}{n+2-k}=\frac{1}{n+1}u_{n}-\frac{1}{%
(n+1)(n+2)}\left( u_{n+1}-\binom{n+2}{n+1}B_{n+1}\right)
\end{equation*}%
As if $n\geq 1$, we have $u_{n}=u_{n+1}=0$ and as $\binom{n+2}{n+1}=n+2$, we
get : 
\begin{equation*}
\frac{-1}{(n+1)(n+2)}\left( -\binom{n+2}{n+1}B_{n+1}\right) =\frac{B_{n+1}}{%
n+1}.
\end{equation*}

which gives the asked relation :

\begin{equation*}
\sum_{k=0}^{n}\binom{n}{k}\frac{B_{k}}{(n+2-k)}=\frac{B_{n+1}}{n+1},\quad
n\geq 1
\end{equation*}

Finally, Apostol's relationship is proved without use of \textit{integration
method}.
\end{proof}

\section{Pearl \#3: New proof of a relationship}

In our paper \cite{ZEK-BEN2}, we proved the following relationship: 
\begin{equation*}
\sum_{k=0}^{n+q}\binom{n+q}{k}\left( \prod_{j=1}^{q}(n+k+j)\right) B_{n+k}=0
\end{equation*}%
where $q$ is an odd number. For this, we showed that the two well-suited
polynomials : 
\begin{equation*}
H_{n}(x)=\frac{1}{2}x^{n+q}(x-1)^{n+q}
\end{equation*}%
and 
\begin{equation*}
K_{n}(x)=\sum_{k=0}^{n+q}\frac{\varepsilon _{n+k}}{n+q+k+1}\binom{n+q}{k}%
B_{n+q+k+1}(x)-B_{n+q+k+1}
\end{equation*}%
Are equal, where $B_{n}(x)$ and $B_{n}$ are respectively the Bernoulli
polynomials and the Bernoulli numbers defined by the generating function: 
\begin{equation*}
\frac{x}{e^{x}-1}e^{xz}=\sum_{n=0}^{+\infty }B_{n}(x)\frac{x^{n}}{n!}
\end{equation*}%
knowing that $B_{n}=B_{n}(0)=B_{n}(1)$, $n\geq 2$; $B_{2n+1}=0$, $n\geq 1$,
see \cite{APO} , relations (11), (12) (13) and (15). Furthermore, we shall
use the well-known equalities: 
\begin{equation*}
B_{n}(x+1)-B_{n}(x)=nx^{n-1},\qquad B_{n}^{\prime }(x)=nB_{n-1}(x),\qquad
\int_{x}^{x+1}B_{n}(t)dt=x^{n},\quad n\geq 0
\end{equation*}%
for $n\geq 1$, see e.g. \cite{APO}, relations (14), (27) and (30).

Now, to give another proof of the relation already proved in \cite{ZEK-BEN2}%
, we consider the polynomials: 
\begin{equation*}
P_{n}(x):=\frac{1}{2}x^{n+1}(x-1)^{n+1}
\end{equation*}%
\begin{equation*}
K_{n}(x):=\sum_{k=0}^{n+1}\binom{n+1}{k}\frac{1-(-1)^{n+1-k}}{2}B_{n+1+k}(x)
\end{equation*}%
\begin{equation*}
H_{n}(x):=\frac{1}{2}(n+1)x^{n}(x-1)^{n}(2x-1)
\end{equation*}%
and the automorphism of the $\mathbb{Q}$-space vector $\mathbb{Q}[x]$
defined by $f\left( P(x)\right) =\int_{x}^{x+1}P(t)dt$.

First of all, let's prove the

\begin{thm}
The two polynomials $K_{n}(x)$ and $H_{n}(x)$ are equal, i.e. $%
K_{n}(x)=H_{n}(x)$.
\end{thm}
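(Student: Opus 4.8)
The plan is to push everything through the automorphism $f$, which acts transparently on the Bernoulli basis: the stated identity $\int_{x}^{x+1}B_{m}(t)\,dt=x^{m}$ says precisely that $f\bigl(B_{m}(x)\bigr)=x^{m}$ for every $m\geq 0$. Since $f$ is an automorphism of $\mathbb{Q}[x]$, it is injective, so it is enough to prove $f\bigl(K_{n}(x)\bigr)=f\bigl(H_{n}(x)\bigr)$. A one-line preliminary computation disposes of $H_{n}$: differentiating $P_{n}(x)=\tfrac12 x^{n+1}(x-1)^{n+1}$ gives
\begin{equation*}
P_{n}'(x)=\tfrac12(n+1)x^{n}(x-1)^{n}\bigl((x-1)+x\bigr)=\tfrac12(n+1)x^{n}(x-1)^{n}(2x-1)=H_{n}(x),
\end{equation*}
so $H_{n}=P_{n}'$.

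Next I would compute $f(K_{n})$. Using linearity of $f$ together with $f\bigl(B_{n+1+k}(x)\bigr)=x^{n+1+k}$,
\begin{equation*}
f\bigl(K_{n}(x)\bigr)=\sum_{k=0}^{n+1}\binom{n+1}{k}\,\frac{1-(-1)^{n+1-k}}{2}\,x^{n+1+k}=\frac{x^{n+1}}{2}\sum_{k=0}^{n+1}\binom{n+1}{k}\bigl(1-(-1)^{n+1-k}\bigr)x^{k}.
\end{equation*}
By the binomial theorem the inner sum is $(1+x)^{n+1}-(x-1)^{n+1}$, whence
\begin{equation*}
f\bigl(K_{n}(x)\bigr)=\frac{x^{n+1}}{2}\Bigl((1+x)^{n+1}-(x-1)^{n+1}\Bigr).
\end{equation*}

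Finally, because $H_{n}=P_{n}'$, the fundamental theorem of calculus gives $f\bigl(H_{n}(x)\bigr)=\int_{x}^{x+1}P_{n}'(t)\,dt=P_{n}(x+1)-P_{n}(x)$, and substituting the formula for $P_{n}$ turns this into $\tfrac12 x^{n+1}\bigl((x+1)^{n+1}-(x-1)^{n+1}\bigr)$, the very same polynomial. Hence $f(K_{n})=f(H_{n})$, and injectivity of $f$ yields $K_{n}=H_{n}$.

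As for the difficulty: conceptually the proof is short, the whole point being that $f$ inverts the change of basis $B_{m}\leftrightarrow x^{m}$. The only genuine bookkeeping is collapsing the parity-weighted sum $\sum_{k}\binom{n+1}{k}\tfrac{1-(-1)^{n+1-k}}{2}x^{k}$ into $\tfrac12\bigl((1+x)^{n+1}-(x-1)^{n+1}\bigr)$ and matching signs; I would also make sure the assertion that $f$ is an automorphism is taken as given from the setup, since injectivity is what the argument ultimately rests on.
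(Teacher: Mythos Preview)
Your proof is correct and follows essentially the same route as the paper: both arguments apply the automorphism $f(P)(x)=\int_x^{x+1}P(t)\,dt$, use $f(B_m)=x^m$ to turn $K_n$ into the closed form $\tfrac12 x^{n+1}\bigl((x+1)^{n+1}-(x-1)^{n+1}\bigr)$, identify this with $P_n(x+1)-P_n(x)=f(P_n')$, and then invoke injectivity of $f$ together with the direct computation $P_n'=H_n$. The only cosmetic difference is ordering---you first verify $H_n=P_n'$ and then compare $f(K_n)$ with $f(H_n)$, whereas the paper first matches $f(P_n')$ with $f(K_n)$ and only afterwards differentiates $P_n$---but the substance is identical.
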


\begin{proof}
\begin{eqnarray*}
\int_{x}^{x+1}P_{n}^{\prime }(t)dt &=&P_{n}(x+1)-P_{n}(x) \\
&=&\frac{1}{2}x^{n+1}(x+1)^{n+1}-\frac{1}{2}x^{n+1}(x-1)^{n+1} \\
&=&\frac{1}{2}x^{n+1}\left( (x+1)^{n+1}-(x-1)^{n+1}\right) \\
&=&x^{n+1}\sum_{k=0}^{n+1}\binom{n+1}{k}\frac{(1-(-1)^{n+1-k}}{2}x^{k} \\
&=&\sum_{k=0}^{n+1}\binom{n+1}{k}\frac{1-(-1)^{n+1-k}}{2}x^{n+1+k} \\
&=&\sum_{k=0}^{n+1}\binom{n+1}{k}\frac{1-(-1)^{n+1-k}}{2}%
\int_{x}^{x+1}B_{n+1+k}(t)dt \\
&=&\int_{x}^{x+1}\left( \sum_{k=0}^{n+1}\binom{n+1}{k}\frac{1-(-1)^{n+1-k}}{2%
}B_{n+1+k}(t)dt\right)
\end{eqnarray*}%
Thus, we can see that 
\begin{equation*}
f\left( P_{n}^{\prime }(t)\right) =f\left( \sum_{k=0}^{n+1}\binom{n+1}{k}%
\frac{1-(-1)^{n+1-k}}{2}B_{n+1+k}(t)\right)
\end{equation*}%
As $f$ is bijective, we get : 
\begin{equation*}
P_{n}^{\prime }(t)=\sum_{k=0}^{n+1}\binom{n+1}{k}\frac{1-(-1)^{n+1-k}}{2}%
B_{n+1+k}(t)
\end{equation*}

i.e.%
\begin{equation*}
P_{n}^{\prime }(t)=K_{n}(t)
\end{equation*}

Let's compute $P_{n}^{\prime }(x)$ : 
\begin{eqnarray*}
P_{n}^{\prime }(x) &=&\frac{1}{2}\left( x^{2}-x)^{n+1}\right) ^{\prime } \\
&=&\frac{1}{2}(n+1)(x^{2}-x)^{n}(2x-1) \\
&=&\frac{1}{2}(n+1)x^{n}(x-1)^{n}(2x-1) \\
&=&H_{n}(x)
\end{eqnarray*}%
i.e. 
\begin{equation*}
K_{n}(x)=H_{n}(x).
\end{equation*}
\end{proof}

\begin{thm}
The following identity holds: 
\begin{equation*}
\sum_{k=0}^{n+q}\binom{n+q}{k} \left(\prod_{j=1}^q(n+k+j)\right) B_{n+k}=0
\end{equation*}
\end{thm}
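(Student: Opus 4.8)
The plan is to prove this by the same device that drives the preceding theorem: push everything through the automorphism $f$. First I would rewrite the left-hand side as the value at $x=0$ of a $q$-th derivative. Since $B_m'(x)=mB_{m-1}(x)$, iterating gives $\frac{d^q}{dx^q}B_{n+q+k}(x)=\big(\prod_{j=1}^q(n+k+j)\big)B_{n+k}(x)$; and since $B_m(0)=B_m$ for every $m\ge 0$ (with the convention $B_1=-\tfrac12=B_1(0)$ forced by the recursion defining the $B_n$), the left-hand side equals
\[
\frac{d^q}{dx^q}\,\Phi_n(x)\Big|_{x=0},\qquad \Phi_n(x):=\sum_{k=0}^{n+q}\binom{n+q}{k}B_{n+q+k}(x).
\]

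Next I would compute $\Phi_n$ using $f$. Because $f(B_m(x))=\int_x^{x+1}B_m(t)\,dt=x^m$ and $f$ is $\mathbb{Q}$-linear, applying $f$ to $\Phi_n$ yields $\sum_{k=0}^{n+q}\binom{n+q}{k}x^{n+q+k}=x^{n+q}(1+x)^{n+q}$, hence $\Phi_n(x)=f^{-1}\big(x^{n+q}(1+x)^{n+q}\big)$. I would also record two facts that are immediate from the integral definition of $f$: it commutes with $\frac{d}{dx}$ (both $f\circ\frac{d}{dx}$ and $\frac{d}{dx}\circ f$ send $P$ to $P(x+1)-P(x)$), and it commutes with every translation $x\mapsto x+c$; the second fact, together with $f(B_m(x))=x^m$, gives $f^{-1}\big((x+\tfrac12)^m\big)=B_m(x+\tfrac12)$. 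Consequently the left-hand side equals $f^{-1}(W_n)(0)$, where $W_n(x):=\frac{d^q}{dx^q}\big[x^{n+q}(1+x)^{n+q}\big]$.

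The decisive step is a parity observation, and it is here that $q$ being odd enters. Writing $x^{n+q}(1+x)^{n+q}=\big((x+\tfrac12)^2-\tfrac14\big)^{n+q}$ displays it as an even polynomial in $x+\tfrac12$; differentiating it $q$ times, with $q$ odd, produces an odd polynomial in $x+\tfrac12$, so $W_n(x)=\sum_{\ell\ge 0}c_\ell\,(x+\tfrac12)^{2\ell+1}$ for suitable rationals $c_\ell$ — and no negative exponent can appear, since any monomial of degree $<q$ is annihilated by $\frac{d^q}{dx^q}$. Therefore, by the previous paragraph,
\[
\frac{d^q}{dx^q}\,\Phi_n(x)\Big|_{x=0}=f^{-1}(W_n)(0)=\sum_{\ell\ge 0}c_\ell\,B_{2\ell+1}\big(\tfrac12\big)=0,
\]
because $B_{2\ell+1}(\tfrac12)=0$ for every $\ell\ge 0$, the fact already used in the proof of Pearl \#1. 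The only real obstacle I anticipate is bookkeeping: checking that no Bernoulli polynomial of negative index ever gets written, that the $m=1$ contribution is correctly absorbed by $B_1(0)=B_1$, and that the two commutation properties of $f$ genuinely hold; the symmetry computation itself is instant. Finally, I would remark that the identity also drops out immediately from the Bench\'erif--Garici relation recalled in Section 2 upon setting $m=n$: the prefactor $(-1)^n-(-1)^{n+q}$ produced there is nonzero exactly because $q$ is odd, and $q!\binom{n+q+k}{q}=\prod_{j=1}^q(n+k+j)$.
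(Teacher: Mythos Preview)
Your argument is correct. Both proofs pass through the automorphism $f$ and the polynomial $x^{n+q}(1+x)^{n+q}$ (the paper uses the variant $x^{n+q}(x-1)^{n+q}$), but they part ways at the extraction step. The paper first proves the auxiliary identity $K_{n+q-1}(x)=H_{n+q-1}(x)$ of Theorem~4.1 and then reads off the coefficient of $x^{q}$ on both sides, cleaning up the parity weight $\tfrac{1-(-1)^{n+q-k}}{2}$ with the vanishing of $B_{2m+1}$ for $m\ge 1$. You instead differentiate $q$ times and evaluate at $x=0$, and your decisive input is the symmetry $x^{n+q}(1+x)^{n+q}=\big((x+\tfrac12)^2-\tfrac14\big)^{n+q}$ together with $B_{2\ell+1}(\tfrac12)=0$ --- exactly the mechanism of Pearl~\#1. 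Your route is a bit more direct (it bypasses the intermediate Theorem~4.1 and the coefficient bookkeeping) and has the pleasant feature of unifying Pearls~\#1 and~\#3 under a single idea; the paper's route, on the other hand, yields the explicit polynomial identity $K_n=H_n$ as a by-product. Your closing remark that the theorem also falls out of the Bench\'erif--Garici relation by setting $m=n$ is a nice observation the paper does not make.
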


\begin{proof}
To get this, let's replace $n$ by $n+q-1$, $q\geq 1$, $q$ odd. Then we
compute the coefficient of $x^{q}$ in the equality: $%
K_{n+q-1}(x)=H_{n+q-1}(x)$. The coefficient of $x^{q}$ in the polynomial $%
K_{n}(x)$ 
\begin{equation*}
K_{n}(x):=\sum_{k=0}^{n+q}\binom{n+q}{k}\frac{1-(-1)^{n+q-k}}{2}B_{n+1+k}(x)
\end{equation*}%
is : 
\begin{equation*}
C_{q}:=[x]^{q}B_{n+q-1}(x),
\end{equation*}%
so that $C_{q}$ has the value 
\begin{equation*}
C_{q}=\sum_{k=0}^{n+q}\binom{n+q}{k}\left( \prod_{j=1}^{q}(n+k+j)\right) 
\frac{1-(-1)^{n+q-k}}{2}B_{n+1+k}
\end{equation*}%
On the other hand, the coefficient of $x^{q}$ in $H_{n+q-1}(x)$ is : 
\begin{equation*}
\left\{ 
\begin{array}{ll}
0 & \text{if}\ n\geq 1 \\ 
(-1)^{q+1}q & \text{if}\ n=0%
\end{array}%
\right.
\end{equation*}%
As we have : 
\begin{equation*}
\frac{1+(-1)^{m}}{2}B_{m}=\left\{ 
\begin{array}{ll}
B_{m} & \text{if}\ m\neq 1 \\ 
0 & \text{if}\ m=1%
\end{array}%
\right.
\end{equation*}%
We get the aimed relationship.

\begin{rem}
I would like to dedicate this modest contribution to the memory of Tom Mike
Apostol who passed away on 8 May of this year 2016.(APO2)
\end{rem}
\end{proof}

\vspace{0.5cm}


\begin{thebibliography}{99}
\bibitem{APO} Tom M. Apostol, \textit{A Primer on Bernoulli Numbers and
Polynomials.} Mathematics Magazine, Vol. 81, No. 3.(2008): 178-190 .

\bibitem{APO2} Tom M. Apostol,
https://www.caltech.edu/news/tom-m-apostol-1923-2016-50698

\bibitem{BEN-GAR} Bencherif, F. et Garici, T., \textit{Suites de Ces\`{a}ro
et nombres de Bernoulli}. Publ. Math. Besan\c{c}on. Alg\`{e}bre et Th\'{e}%
orie des Nombres. Nr, (1), (2012): 19-26.

\bibitem{CAR} L. Carlitz, \textit{Problem 795}. Math. Mag. 44 (1971): 107.

\bibitem{CHE-SUN} W. Y. C. Chen, L.H. Sun,\textit{\ Extended Zeilberger's
Algorithm for Identities on Bernoulli and Euler Polynomials}. J. Number
Theory 129, (2009): 2111-2132.

\bibitem{MIK} M. El-Mikkawy, F. Atlan, \textit{Derivation of identities
involving some special polynomials and numbers via generating functions with
applications}. Applied Mathematics and Computation 220 (2013): 518-535 .

\bibitem{GES} I. M. Gessel, \textit{Applications of the classical umbral
calculus}. Algebra Universalis 49 (2003): 397-434, dedicated to the memory
of Gian-Carlo Rota.

\bibitem{GOU-QUA} H. W. Gould and J. Quaintance, \textit{Bernoulli numbers
and a new binomial transform identity}. J. Integer Sequences 17 (2014):
Article 14.2.2.

\bibitem{MAZ} Mazur,\textit{\ Bernoulli numbers and the unity of mathematics}%
.\newline
http://wstein.org/wiki/attachments/2008(2f)480a/Bernoulli.pdf

\bibitem{PRO} H. Prodinger,\textit{\ A Short Proof of Carlitz's Bernoulli
Number Identity}. J. Integer Sequences 17.2 (2014): Article 14.4.1.

\bibitem{SHA} A. G. Shannon, \textit{Solution of Problem 795}. Math. Mag. 45
(1972): 55--56.

\bibitem{VAS-MIS} P. Vassilev and M. Vassilev--Missana, \textit{On one
remarkable identity involving Bernoulli numbers}. Notes on Number Theory and
Discrete Mathematics 11 (2005): 22--24.

\bibitem{WU-SUN-PAN} J. Wu, Z.-W. Sun and H. Pan, \textit{Some identities
for Bernoulli and Euler polynomials}. Fibonacci Quart. 42 (2004): 295-299.

\bibitem{ZEK-BEN1} Z\'{e}kiri, A. and Bencherif, F.,\textit{\ Courte preuve
d'une identit\'{e} concernant les nombres de Bernoulli}.
https://arxiv.org/pdf/1506.08112.pdf

\bibitem{ZEK-BEN2} Z\'{e}kiri, A. and Bencherif, F., \textit{A new recursion
relationship for Bernoulli Numbers}. Annales Mathematicae et Informaticae
Vol.38. (2011): 123-126.
\end{thebibliography}
\end{document}